\documentclass[12pt,oneside,a4paper]{amsart}

\usepackage[T1]{fontenc}
\usepackage{ae}
\usepackage[english]{babel}
\usepackage{amsmath}
\usepackage{amssymb}
\usepackage{amsthm}
\usepackage{mathrsfs}
\usepackage{enumitem}
\usepackage[foot]{amsaddr}

\setlength{\textheight}{9in} 
\setlength{\textwidth}{6in}  
\setlength{\topmargin}{0in}
\setlength{\oddsidemargin}{0.4cm}
\setlength{\evensidemargin}{0.4cm}

%

\usepackage[utf8]{inputenc}  
\usepackage{amscd}
\usepackage{mathrsfs}
\usepackage[all]{xy}
\usepackage[OT1]{fontenc}
\usepackage{amssymb,latexsym}
\usepackage{type1cm,ae}
\usepackage{bbding}
\usepackage[dvipsnames]{xcolor}
\usepackage{pgf,tikz}
\usepackage{changepage}
\usetikzlibrary[arrows]
\usetikzlibrary{decorations.pathreplacing}

\usepackage{afterpage}
\usepackage[doi=false,isbn=false,url=false,backend=biber,style=alphabetic,maxbibnames=99,minalphanames=5,maxalphanames=10,giveninits=true]{biblatex}

\DeclareFieldFormat{pages}{#1}
\renewbibmacro{in:}{%
	\ifentrytype{article}
	{}
	{\bibstring{in}%
		\printunit{\intitlepunct}}}
\AtEveryBibitem{
	\ifentrytype{book}{%
		\clearfield{pages}%
	}{%
	}%
}

\usepackage{graphicx}
\usepackage{xspace}
\usepackage{setspace}
\usepackage[english]{babel}
\usepackage{tikz}
\usepackage{etoolbox}
\usepackage[hidelinks,linktocpage,colorlinks=true,linkcolor=blue,citecolor=red]{hyperref}

\newcommand{\e}{\varepsilon}

\newcommand{\va}{\enspace}
\newcommand{\sumn}{\sum_{i=1}^{n}}
\newcommand{\sumjk}{\sum_{j,k=1}^{n}}

\newcommand{\Om}{\Omega}
\newcommand{\N}{\mathbb{N}}
\newcommand{\Rnn}{\mathbb{R}^{n}}
\newcommand{\R}{\mathbb{R}}

\newcommand{\Z}{\mathbb{Z}}
\newcommand{\chr}[2]{\Gamma^{#1}_{\hphantom{#1}#2}}
\DeclareMathOperator{\spt}{spt}

\DeclareMathOperator{\dive}{div}

\DeclareMathOperator{\Tr}{Tr}

\makeatletter
\renewenvironment{proof}[1][\proofname]{%
	\par\pushQED{\qed}\normalfont%
	\topsep6\p@\@plus6\p@\relax
	\trivlist\item[\hskip\labelsep\bfseries#1\@addpunct{.}]%
	\ignorespaces
}{%
	\popQED\endtrivlist\@endpefalse
}
\makeatother

\newtheorem{thm}{Theorem}[section]
\newtheorem{prop}[thm]{Proposition}

\newtheorem{cor}[thm]{Corollary}

\theoremstyle{definition}

\numberwithin{equation}{section}

\addbibresource{library.bib}

\author{Janne Nurminen}
\address{Department of Mathematics and Statistics, University of Jyväskylä}
\email{janne.s.nurminen@jyu.fi}

\begin{document}
	
	\title{AN INVERSE PROBLEM FOR THE MINIMAL SURFACE EQUATION}

	\begin{abstract}
		We use the method of higher order linearization to study an inverse boundary value problem for the minimal surface equation on a Riemannian manifold $(\Rnn,g)$, where the metric $g$ is conformally Euclidean. In particular we show that with the knowledge of Dirichlet-to-Neumann map associated to the minimal surface equation, one can determine the Taylor series of the conformal factor $c(x)$ at $x_n=0$ up to a multiplicative constant. We show this both in the full data case and in some partial data cases.
		
		\vspace{5pt}
		\noindent
		\textbf{Keywords.} Inverse problem, higher order linearization, quasilinear elliptic equation, minimal surface equation
	\end{abstract}
	\maketitle
	
	\tableofcontents

	
	\section{Introduction}
	
	This article focuses on an inverse problem for the minimal surface equation (MSE), which is a quasilinear elliptic PDE. In particular we consider MSE on a manifold $(\Rnn,g), n\geq3,$ where $g_{ij}(x)=c(x)\delta_{ij},$ $1\leq i,j\leq n,$ with $c\in C^{\infty}(\Rnn),$ $c(x)>0$ for all $x\in\Rnn$, that is the metric is conformally Euclidean. The aim is to use the method of higher order linearization to recover information about the conformal factor $c$ from boundary measurements. This method, which uses the nonlinearity of the partial differential equation as a tool, was first introduced in \cite{MR3802298}  in the case of a nonlinear wave equation and was further developed in \cite{MR4188325}, \cite{MR4104456} for nonlinear elliptic equations. 
	
	The novelty of this work is that we use higher order linearization in the case of MSE. For a sufficiently smooth function $u:\Om\subset\R^{n-1}\to\R$, $\Om$ a bounded domain with $C^{\infty}$ boundary, consider $\text{Graph}_u:=\{(x',u(x')): x'\in\Om\}\subset\Rnn$. If $c\equiv1$ we would call $\text{Graph}_u$ a minimal surface if and only if the function $u$ solves the Euclidean MSE
	\[\dive\left(\frac{\nabla u}{\sqrt{1+|\nabla u|^2}}\right)=0,\quad \text{in}\va\Om.\] 
	Define then a function $F\colon\R^{n^2}\to\R$,
	\begin{align}\label{isoF}
		F(x',u,p,P):=&\:-\sum_{i=1}^{n-1}P_{ii} - \frac{n-1}{2c(x',u)}\left(\sum_{i=1}^{n-1}p_i\partial_{x_i}c(x',u)-\partial_{x_n}c(x',u)\right) \\\notag
		&+ \frac{1}{1+|p|^2}\sum_{i,j=1}^{n-1}P_{ij}p_ip_j,\notag
	\end{align}
	where $p=(p_1,\ldots,p_{n-1})\in\R^{n-1}, P=(P_{ij})$ is an $(n-1)\times(n-1)$ matrix and $x'\in\R^{n-1}$. With the conformally Euclidean metric, MSE takes the form
	\begin{align}\label{minsurfeq}
		F(x',u,\nabla u,\nabla^2u)&=-\Delta u-\frac{n-1}{2c}\left(\nabla_{x'}c\cdot\nabla u-\partial_{x_n}c\right)+\frac{\nabla u^T\nabla^2u\nabla u}{1+|\nabla u|^2}\\\notag
		&=-\dive_{g_{n-1}}\left(\frac{\nabla u}{(1+|\nabla u|^2)^{1/2}}\right)+\frac{(n-1)\partial_{x_n}c}{2c(1+|\nabla u|^2)^{1/2}}\\\notag
		&=0.
	\end{align}
	for $x'\in\Om$. Here $\dive_{g_{n-1}}(a)=\sum_{i=1}^{n-1}\left(\partial_{x_i}a_i+\sum_{j=1}^{n-1}a_j\chr{i}{ij}\right)$ is the Riemannian divergence with respect to the first $n-1$ variables and $\chr{i}{ij}$ is the Christoffel symbol corresponding to the metric $g$. The derivation of this equation is done in Section \ref{section derivation}.

	In this work we consider a boundary value problem
	\begin{equation*}
		\left\{\begin{array}{ll}
			F(x',u,\nabla u,\nabla^2u)=0 & \text{in}\,\, \Om \\
			u=f, & \text{on}\,\, \partial\Om,
		\end{array} \right.
	\end{equation*}
	and prove that it is well-posed (Section \ref{section well-posedness}) for a certain class of small boundary values $f$. To be more precise, we show that there is $\delta>0$ such that whenever $f\in C^s(\partial\Om)$, $s>3$, $s\notin\N,$ with $||f||_{C^s(\partial\Om)}\leq\delta$, there exists a unique small solution $u\in C^s(\bar{\Om})$ with sufficiently small norm. Let $U_{\delta}:=\{h\in C^s(\partial\Om):||h||_{C^s(\partial\Om)}<\delta\}$. Thus the Dirichlet-to-Neumann (DN) map can now be defined for these small solutions as
	\begin{equation}\label{DN minimal}
		\Lambda_c\colon U_{\delta}\to C^{s-1}(\partial\Om),\quad f\mapsto \partial_{\nu}u_f\big|_{\partial\Om}.
	\end{equation}
	Here $C^s=C^{k,\alpha}$, $k\in\Z$, $0<\alpha<1$, is the standard Hölder space (see for example \cite[Section 5.1]{MR2597943}) and $\partial_{\nu}u_f$ is the Euclidean boundary normal derivative. One can think of the normal derivative on the boundary as tension on the boundary caused by the minimal surface. From the knowledge of the DN map, can we recover information about the metric $g$?
	
	It is worth noting that there is a small gauge invariance for the equation \eqref{minsurfeq} and thus for the DN map. That is, if you instead of $c$ put $\lambda c$, $\lambda\neq0$, into \eqref{minsurfeq}, the equation stays the same. Thus also the DN maps $\Lambda_c$ and $\Lambda_{\lambda c}$ are the same.
	
	We also consider partial data cases, that is, if we have knowledge of the DN map in an open subset $\Gamma$ of the boundary $\partial\Om$. In this case the partial DN map is defined for $f\in U_{\delta}$, $\spt(f)\subset\Gamma$, as
	\begin{equation}\label{DN minimal partial}
		\Lambda^{\Gamma}_c\colon U_{\delta}\to C^{s-1}(\partial\Om),\quad f\mapsto \partial_{\nu}u_f\big|_{\Gamma}.
	\end{equation}
	Can we recover information about the metric $g$ if we have knowledge of this partial DN map?
	
	These are our inverse problems for the MSE and our main result gives the following answers. Before stating it, we denote by $F^j$ the function $F$ with $c$ replaced by $c_j$.
	
	
	\begin{thm}\label{main thm}
		Let $\Om\subset\R^{n-1}$, $n\geq3$, be a bounded domain with $C^{\infty}$ boundary, $(\R^n,g_1)$, $(\R^n,g_2)$ be two Riemannian manifolds with $(g_j)_{ik}(x)=c_j(x)\delta_{ik},$ where $c_j\in C^{\infty}(\R^n),$ $c_j(x)>0$ for $j=1,2$ and for all $x\in\R^n$. Assume that $\partial_{x_n}c_j(x',0)=\partial_{x_n}^2c_j(x',0)=0$ for $x'\in\Om$. We have four cases:
		\begin{enumerate}
			\item Let $n>3$ and $\Lambda_{c_j}$ be the DN maps associated to
					\begin{equation}\label{mainBVP}
						\left\{\begin{array}{ll}
								F^j(x',u,\nabla u,\nabla^2u)=0, & \text{in}\,\, \Om \\
								u=f, & \text{on}\,\, \partial\Om,
							\end{array} \right.
					\end{equation}
			$j=1,2$, and assume that
			\begin{equation*}
				\Lambda_{c_1}(f)=\Lambda_{c_2}(f)
			\end{equation*}
			for all $f\in U_{\delta}:=\{h\in C^s(\partial\Om):||h||_{C^s(\partial\Om)}<\delta\}$, where $\delta>0$ is sufficiently small.
			\item Assume either that 
			\begin{enumerate}
				\item $n=3$, $\Gamma\subset\partial\Om$ be open and $\Gamma\neq\emptyset$ or 
				\item $n>3$, $\Om\subset \{x_{n-1}>0\}$, $\Gamma\subset\partial\Om$ be open, $\Gamma\neq\emptyset$ and that $\partial\Om\setminus\Gamma\subset\{x_{n-1}=0\}$ or
				\item $n>3$, $\Om$ is a strict subset of some ball $B\subset\R^{n-1}$, $\Gamma\subset\partial\Om$ be open, $\Gamma\neq\emptyset$ and that $\partial\Om\setminus\Gamma\subset\partial B$.
			\end{enumerate}
			In addition assume that
			\begin{equation*}
				\Lambda^{\Gamma}_{c_1}(f)=\Lambda^{\Gamma}_{c_2}(f)
			\end{equation*}
			for all $f\in U_{\delta}$, $\spt(f)\subset\Gamma$, where $\delta>0$ is sufficiently small and $\Lambda^{\Gamma}_{c_j}$ are the partial DN maps associated to \eqref{mainBVP} for $j=1,2$.
		\end{enumerate}
		Then in the cases $(1)$ and $(2)$ we have for $\lambda\neq0$ \[\partial_{x_n}^mc_1(x',0)=\lambda\partial_{x_n}^mc_2(x',0),\quad \text{in } \Om,\,m\geq0.\]
	\end{thm}
	The assumption $\partial_{x_n}c_j(x',0)=0$ is needed in order for $u\equiv0$ to be a solution to \eqref{mainBVP}, and this is used to prove the well-posedness. The condition $\partial_{x_n}^2c_j(x',0)=0$ is assumed in order for the method to work and it is not known if it could be removed.
	
	As an immediate corollary of Theorem \ref{main thm} we get the following.
	
	\begin{cor}
		Assume the conditions in Theorem \ref{main thm} and assume additionally that $c_j$ are real analytic with respect to $x_n$.
		Then for $\lambda\neq0$ we have
		\[c_1(x)=\lambda c_2(x),\quad x\in\Om\times\R.\]
	\end{cor}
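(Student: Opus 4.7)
The plan is to combine Theorem \ref{main thm} with the identity theorem for real analytic functions of a single variable. Theorem \ref{main thm} already delivers the equality of every $x_n$-derivative of $c_1$ and $c_2$ at the hyperplane $x_n=0$ on $\Omega$; the additional real analyticity hypothesis reconstructs each $c_j(x',\cdot)$ from these derivatives.

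Concretely, I would proceed as follows. First, fix an arbitrary $x'\in\Omega$ and set $h(x_n):=c_1(x',x_n)-c_2(x',x_n)$. Since $c_1,c_2$ are real analytic in $x_n$, so is $h$, and by hypothesis its domain of analyticity is all of $\R$, which is connected. Second, by Theorem \ref{main thm}, for every nonnegative integer $m$,
\begin{equation*}
    h^{(m)}(0)=\partial_{x_n}^m c_1(x',0)-\partial_{x_n}^m c_2(x',0)=0.
\end{equation*}
Third, the identity principle for real analytic functions on a connected domain implies $h\equiv 0$ on $\R$, i.e.\ $c_1(x',x_n)=c_2(x',x_n)$ for all $x_n\in\R$. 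Since $x'\in\Omega$ was arbitrary, the conclusion $c_1=c_2$ on $\Omega\times\R$ follows.

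There is no real obstacle here, as the nontrivial content is already packaged in Theorem \ref{main thm}; the only thing to verify is that the analyticity assumption is strong enough to upgrade equality of the full Taylor series at $x_n=0$ to equality on all of $\R$, which is precisely the role of the connectedness of $\R$ in the identity theorem.
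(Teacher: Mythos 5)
Your proposal is correct and is exactly the argument the paper has in mind when it labels the corollary "immediate": Theorem \ref{main thm} gives agreement of the full Taylor series in $x_n$ at $x_n=0$ for each fixed $x'\in\Omega$, and the identity principle for real analytic functions on the connected set $\R$ upgrades this to $c_1(x',\cdot)\equiv c_2(x',\cdot)$. No gaps.
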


	In section \ref{section proof of main} we give a full proof of Theorem \ref{main thm} and as mentioned, it will use higher order linearization together with complex geometric optics (CGO) solutions. In the proof we first linearize \eqref{mainBVP} at $u\equiv0$ and the DN map at $f=0$. We see that the linearization of \eqref{mainBVP} correspond to a conductivity equation where the conductivity is $c_j(x',0)$. The first linearization of the DN map maps a boundary value $f$ to $\partial_{\nu}v|_{\partial\Om}$ where $v$ is a solution to the conductivity equation. We will show that $c_j(x',0)$ can be recovered up to a multiplicative constant with the knowledge of this (partial) DN map with the help of boundary determination for a first order perturbation of the Laplacian from \cite{Brown2006} (\cite{Imanuvilov2010} for $n=3$ and \cite{MR873380}, \cite{Isakov2007} for $n>3$). In the full data case the higher order linearizations lead to an integral equality
	\begin{equation*}
		\int_{\Om}\left(\partial_{x_n}^{m+4}c_1(x',0)-\lambda\partial_{x_n}^{m+4}c_2(x',0)\right)\prod_{N=1}^{m+3}v^{l_N}\,dx'=0.
	\end{equation*}
	where $v^{l_N}$ are solutions to the first linearization. For the partial data cases, we need a special solution $v^{(0)}$ which is positive in $\Om$ and vanishes on $\partial\Om\setminus\Gamma$. With the help of this function we get the integral identity
	\begin{equation*}
		\int_{\Om}\left(\partial_{x_n}^{m+4}c_1(x',0)-\lambda\partial_{x_n}^{m+4}c_2(x',0)\right)v^{(0)}\prod_{N=1}^{m+3}v^{l_N}\,dx'=0.
	\end{equation*}
	Again $v^{l_N}$ are solutions to the first linearization. In both cases, choosing two of $v^{l_N}$ to be real or imaginary parts of CGO solutions and the rest equal to $1$ we get that $\partial_{x_n}^{m+4}c_1(x',0)=\lambda\partial_{x_n}^{m+4}c_2(x',0)$ (for $n=3$ \cite{MR2387648}, for $n>3$ \cite{MR873380}).
	
	This method has received a lot of attention in various situations lately. Linearization has already been used in a parabolic case in \cite{MR1233645} where the author shows that the first linearization of the nonlinear DN map is the DN map of a linear equation. Thus one can use the theory of inverse problems for linear equations. Also nonlinear elliptic cases have been studied, for example in \cite{MR1295934}, \cite{MR1465069}. As mentioned above, the method of higher order linearization was first used in \cite{MR3802298} for a nonlinear wave equation. After that there were two simultaneously published articles (\cite{MR4188325}, \cite{MR4104456}) in which higher order linearization was introduced to nonlinear elliptic equations of the type $\Delta u +a(x,u)=0$. The important thing in this method was that it used the nonlinearity as a tool. In \cite{MR4052205}, \cite{MR4269409} the method was further developed for the case $\Delta u +a(x,u)=0$ in inverse problems with partial data. See also \cite{Salo2022} and \cite{MR4332042} for more results on the special case of a power type nonlinearity.
	
	After these, there have been several articles using this method for different nonlinear elliptic equations. Different cases of nonlinear conductivity equations have had a treatment in \cite{MR4300916}, \cite{kian2020partial}. This method has also been used in the case of a nonlinear magnetic Schrödinger equation (\cite{lai2020partial}) and in inverse transport and diffusion problems \cite{lai2021inverse}. See also \cite{MR4216606} for a semilinear elliptic equation with gradient nonlinearities and \cite{lai2020inverse} for the case of fractional semilinear elliptic equations.
	
	There are also works in inverse problems that have considered the minimal surface equation. The Euclidean case has had a treatment in \cite{Munoz2020} where the authors consider a quasilinear conductivity depending on a function $u$ and its gradient.
	
	Also while writing this article we have learned that C\u{a}t\u{a}lin I. Cârstea, Matti Lassas, Tony Liimatainen and Lauri Oksanen are working on an inverse problem involving minimal surface equation on a Riemannian manifold in their upcoming preprint \cite{Carstea2022}. They simultaneously and independently prove a result similar to Theorem \ref{main thm}. In their work it is shown that from the knowledge of the DN map of the minimal surface equation it is possible to determine a $2$-dimensional Riemannian manifold $(\Sigma,g)$. We agreed with them to publish our preprints at the same time on the same preprint server.
	
	
	This article is organized as follows. In Section \ref{section well-posedness} we prove well-posedness for a general nonlinear boundary value problem and we describe the first and second order linearizations for the general case. Section \ref{section derivation} is dedicated to the derivation of the minimal surface equation on a manifold with conformally Euclidean metric. Section \ref{section minimal surface} consists of describing the setting for Theorem \ref{main thm} and then calculating the first and second order linearizations in this setting. Finally, we will use higher order linearization to prove Theorem \ref{main thm} in Section \ref{section proof of main}.
	\\
	
	\textbf{Acknowledgements.} The author was supported by the Finnish Centre of Excellence in Inverse Modelling and Imaging (Academy of Finland grant 284715).
	The author would like to thank Mikko Salo for helpful discussions on the minimal surface equation and everything related to inverse problems. The author would also like to thank the anonymous referees for their useful comments and suggested improvements.

	%
	%

	\section{Well-posedness and linearizations}\label{section well-posedness}
	
	In this section, we consider general equations $F(x,u,\nabla u,\nabla^2u)=0$ and in later sections apply these methods. Let $\Om\subset\R^n, n\geq2$ be a bounded domain with $C^{\infty}$ boundary and let
	$F\colon \bar{\Om}\times\R\times\Rnn\times\R^{n^2}\to\R,$ be a $C^{\infty}$ function. Consider next the boundary value problem
	\begin{equation}\label{eq F}
		\left\{\begin{array}{ll}
			F(x,u,\nabla u,\nabla^2u)=0, & \text{in}\,\, \Om \\
			u=f, & \text{on}\,\, \partial\Om,
		\end{array} \right.
	\end{equation}
	where $f\in C^s(\partial\Om)$ and $\nabla u, \nabla^2u$ denote the gradient and Hessian of $u$, respectively. In addition let $F(x,0,0,0)=0$ which guarantees that $u\equiv0$ is a solution to \eqref{eq F} with $f=0$.

	Next we prove well-posedness for \eqref{eq F} using the implicit function theorem on Banach spaces (\cite[Theorem 10.6 and Remark 10.5]{MR2028503}). In what follows, we denote for $m\times n$ matrices $A=(a_{ij}), B=(b_{ij})$ the matrix product
	\[A:B=\sum_{i=1}^{m}\sum_{j=1}^{n}a_{ij}b_{ij}\]
	and $\nabla_PF$ is the matrix with elements $\partial_{P_{ij}}F$. Also a linear differential operator $Lu=A(x):\nabla^2u+b(x)\cdot\nabla u+c(x)u$ is strictly elliptic (\cite{GilbargTrudinger}) in $\Om$ if for some constants $\lambda,\Lambda>0$ we have
	\[\lambda|\xi|^2\leq \xi^{T}A\xi\leq\Lambda|\xi|^2,\quad x\in\bar{\Om},\]
	for all $\xi\in\Rnn\setminus\{0\}$. Here $A$ is a symmetric $n\times n$ matrix.
	
	\begin{prop}\label{well-pos prop}
		Let $F\colon\Om\times\R\times\Rnn\times\R^{n^2}\to\R$ be a $C^{\infty}$ mapping with $F(x,0,0,0)=0$. 
		Furthermore assume that the map
		\begin{equation*}
			v\mapsto L(v):=\partial_uF(x,0,0,0)v+\nabla F(x,0,0,0)\cdot\nabla v+\nabla_PF(x,0,0,0):\nabla^2v
		\end{equation*}
		is injective on $H^1_0(\Om)$ and that the operator $L$ is strictly elliptic. Let $s>3, s\notin\N$. Then there exists $C, \delta>0$ such that for any 
		\[f\in U_{\delta}:=\{h\in C^s(\partial\Om):||h||_{C^s(\partial\Om)}<\delta\}\]
		the boundary value problem
		\begin{equation*}
			\left\{\begin{array}{ll}
				F(x,u,\nabla u,\nabla^2u)=0, & \text{in}\,\, \Om \\
				u=f, & \text{on}\,\, \partial\Om,
			\end{array} \right.
		\end{equation*}
		has a unique small solution $u=u_f$ which satisfies
		\begin{equation*}
			||u||_{C^s(\bar{\Om})}\leq C||f||_{C^s(\partial\Om)}.
		\end{equation*}
		Moreover the following mappings are $C^{\infty}$ maps
		\begin{align*}
			S&\colon U_{\delta}\to C^s(\bar{\Om}), &&f\mapsto u_f,\\
			\Lambda&\colon U_{\delta}\to C^{s-1}(\partial\Om), &&f\mapsto \partial_{\nu}u_f|_{\partial\Om}.
		\end{align*}
	\end{prop}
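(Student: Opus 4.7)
The strategy is to apply the Banach-space implicit function theorem, as the statement already hints. The first step is to introduce the map
\[
\Phi\colon C^s(\bar\Om)\times C^s(\partial\Om)\to C^{s-2}(\bar\Om)\times C^s(\partial\Om),\quad
\Phi(u,f)=\bigl(F(\cdot,u,\nabla u,\nabla^2u),\; u|_{\partial\Om}-f\bigr),
\]
so that the boundary value problem is exactly $\Phi(u,f)=0$ and the hypothesis $F(x,0,0,0)=0$ guarantees $\Phi(0,0)=0$. Because $s-2>1$, Hölder spaces form a Banach algebra in the relevant range and Nemytskii composition with the smooth function $F$ yields a $C^\infty$ map from $C^s(\bar\Om)$ into $C^{s-2}(\bar\Om)$; together with the continuity of the trace and the linear dependence on $f$, this shows $\Phi$ is a $C^\infty$ map between Banach spaces. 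A direct chain-rule computation then gives
\[
D_u\Phi(0,0)v=\bigl(Lv,\; v|_{\partial\Om}\bigr),
\]
with $L$ as in the statement.

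The main analytic task, and what I expect to be the crux of the argument, is to show that $D_u\Phi(0,0)$ is a Banach-space isomorphism $C^s(\bar\Om)\to C^{s-2}(\bar\Om)\times C^s(\partial\Om)$. This amounts to the unique solvability in $C^s(\bar\Om)$ of the linear Dirichlet problem $Lv=h$ in $\Om$, $v|_{\partial\Om}=g$, for every admissible pair $(h,g)$. Strict ellipticity of $L$ together with smoothness of its coefficients (which follow from the smoothness of $F$) yields the global Schauder estimate
\[
\|v\|_{C^s(\bar\Om)}\le C\bigl(\|Lv\|_{C^{s-2}(\bar\Om)}+\|v|_{\partial\Om}\|_{C^s(\partial\Om)}+\|v\|_{C^0(\bar\Om)}\bigr),
\]
and the standard Fredholm theory for second-order elliptic operators on a smooth bounded domain (cf.\ Gilbarg--Trudinger) shows that the Dirichlet map $(L,\Tr)$ has Fredholm index zero on these scales. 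The assumed injectivity of $L$ on $H^1_0(\Om)$ transfers to injectivity on $\{v\in C^s(\bar\Om):v|_{\partial\Om}=0\}$ by elliptic regularity, since any such $v$ belongs to $H^1_0(\Om)$; index zero then forces surjectivity, and the open mapping theorem (or the Schauder estimate itself) provides a bounded inverse.

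Once $D_u\Phi(0,0)$ is known to be invertible, the Banach-space implicit function theorem supplies $\delta>0$ and a $C^\infty$ map $S\colon U_\delta\to C^s(\bar\Om)$ with $S(0)=0$ and $\Phi(S(f),f)=0$, giving the unique small solution $u_f:=S(f)$; the estimate $\|u_f\|_{C^s(\bar\Om)}\le C\|f\|_{C^s(\partial\Om)}$ is then nothing but Lipschitz continuity of $S$ at the origin (shrinking $\delta$ if necessary). Finally, the normal trace $u\mapsto\partial_\nu u|_{\partial\Om}$ is a bounded linear map from $C^s(\bar\Om)$ into $C^{s-1}(\partial\Om)$, so $\Lambda=\partial_\nu(\cdot)|_{\partial\Om}\circ S$ is automatically $C^\infty$ as a composition of smooth maps.
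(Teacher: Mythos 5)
Your proposal follows essentially the same route as the paper: the same nonlinear operator $\Phi$ (the paper calls it $T$), smoothness of the Nemytskii/composition map on Hölder spaces, computation of $D_u\Phi(0,0)v=(Lv,v|_{\partial\Om})$, invertibility via Schauder/Fredholm theory plus the assumed $H^1_0$-injectivity, and then the implicit function theorem. The only notational point is that you (correctly) record $\Lambda$ as mapping into $C^{s-1}(\partial\Om)$, which matches what the paper's own proof concludes despite the $C^s$ codomain written in the statement.
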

	
	\begin{proof}
		Let $X=C^s(\partial\Om), Y=C^s(\bar{\Om}), Z=C^{s-2}(\bar{\Om})\times C^s(\partial\Om)$ and 
		\begin{equation*}
			T\colon X\times Y\to Z,\quad T(f,u)=\left(F(x,u,\nabla u,\nabla^2u),u|_{\partial\Om}-f\right)
		\end{equation*}
		Since $u|_{\partial\Om}, f\in C^s(\partial\Om)$, $u\in C^s(\bar{\Om})$ and $F\in C^{\infty}$, the map $T$ really has this mapping property.
		
		Next we show that the map $u\mapsto F(x,u,\nabla u,\nabla^2u)$ is a $C^{\infty}$ map $C^s(\bar{\Om})\to C^{s-2}(\bar{\Om})$. This is done by using a Taylor expansion. Write $\lambda=(z,p,P)\in\R\times\Rnn\times\R^{n^2}$ and expand $F(x,\,\cdot\,)$ at $\mu\in\R\times\Rnn\times\R^{n^2}$:
		\begin{equation*}
			F(x,\lambda+\mu)=\sum_{|\alpha|\leq k}\frac{D^{\alpha}_{\mu}F(x,\lambda)}{\alpha!}\mu^{\alpha}+\sum_{|\beta|=k+1}R_{\beta}(\lambda+\mu)\mu^{\beta},
		\end{equation*}
		where 
		\begin{equation*}
			R_{\beta}(\lambda+\mu)=\frac{|\beta|}{\beta!}\int_{0}^{1}(1-t)^{|\beta|-1}D_{\mu}^{\beta}F(x,\lambda+t\mu)\,dt.
		\end{equation*}
		Now let $u\in C^s(\bar{\Om})$ be fixed, $\lambda=(u,\nabla u,\nabla^2u)$ and let $\mu=(h,\nabla h,\nabla^2h), h\in C^{s}(\bar{\Om})$ be such that $||\mu||_{C^{1,\alpha}(\bar{\Om})}\leq 1.$
		It is enough to show that the map $u\mapsto D^{\alpha}_{\mu}F(x,u,\nabla u,\nabla^2u)$ is continuous for all $\alpha$ and
		\begin{equation*}
			R_{\beta}(\lambda+\mu)=o(\mu^k) \quad\text{in}\quad C^s(\bar{\Om}).
		\end{equation*}
		
		Firstly, since the composition of a $C^{\infty}$ function $F$ with a $C^{s-2}$ function is again a $C^{s-2}$ function (\cite[Theorem A.8]{MR602181}), we have the continuity. The space $C^s(\bar{\Om})$ is an algebra under pointwise multiplication (\cite[Theorem A.7]{MR602181}), and thus
		\begin{align*}
			||R_{\beta}(\lambda+\mu)\mu^{\beta}||_{C^s(\bar{\Om})}&\leq C\left(||R_{\beta}(\lambda+\mu)||_{C(\bar{\Om})}||\mu^{\beta}||_{C^s(\bar{\Om})}+||R_{\beta}(\lambda+\mu)||_{C^s(\bar{\Om})}||\mu^{\beta}||_{C(\bar{\Om})}\right)\\
			&\leq C||R_{\beta}(\lambda+\mu)||_{C^s(\bar{\Om})}||\mu||_{C^s(\bar{\Om})}^{|\beta|}\\
			&\leq C ||\mu||_{C^s(\bar{\Om})}^{|\beta|} \frac{|\beta|}{\beta!}\int_{0}^{1}(1-t)^{|\beta|-1}||D_{\mu}^{\beta}F(x,\lambda+t\mu)||_{C^s(\bar{\Om})}\,dt\\
			&\leq C_{F,u} ||\mu||_{C^s(\bar{\Om})}^{|\beta|} \frac{|\beta|}{\beta!}\int_{0}^{1}(1-t)^{|\beta|-1}\,dt
		\end{align*}
		where $||D_{\mu}^{\beta}F(x,\lambda+t\mu)||_{C^s(\bar{\Om})}$ is uniformly bounded in $t\in (0,1)$ and the bounding constant may depend on $u$ and $F$. This is due to $F$ being a $C^{\infty}$ function and that $u\in C^s(\bar{\Om})$.
		Now the remainder satisifes
		\begin{equation*}
			\Big|\Big|\sum_{|\beta|=k+1}R_{\beta}(\lambda+\mu)\mu^{\beta}\Big|\Big|_{C^s(\bar{\Om})}\leq C||(h,\nabla h,\nabla^2h)||_{C^s(\bar{\Om})}^{k+1}
		\end{equation*}
		and hence the map $u\mapsto F(x,u,\nabla u,\nabla^2u)$ is a $C^{\infty}$ map $C^s(\bar{\Om})\to\R$.
		
		By the assumption $F(x,0,0,0)=0$ we have $T(0,0)=0$. Also $D_uT(0,0)$ is linear and
		\[D_uT(0,0)v=(\partial_uF(x,0,0,0)v+\nabla_pF(x,0,0,0)\cdot \nabla v+\nabla_PF(x,0,0,0): \nabla^2v,v|_{\partial\Om}).\]
		The mapping $v\mapsto L(v)$ is injective and $v\equiv0$ is a solution to 
		\begin{equation}\label{lineqF}
			\left\{\begin{array}{ll}
				\partial_uF(x,0,0,0)v+\nabla_pF(x,0,0,0)\cdot \nabla v+\nabla_PF(x,0,0,0): \nabla^2v=H, & \text{in}\,\, \Om \\
				v=g, & \text{on}\,\, \partial\Om,
			\end{array} \right.
		\end{equation}
		when $H=g=0$. Using Fredholm alternative (\cite[Theorem 6.15]{GilbargTrudinger}) the boundary value problem \eqref{lineqF} has a unique solution for all $H$ and $g$. Thus $D_uT(0,0)$ is surjective.
		
		Then by the implicit function theorem there exists $\delta>0$ and $U_{\delta}:=B(0,\delta)\subset X=C^s(\partial\Om)$ and a $C^{\infty}$ map $S\colon U_{\delta}\to Y=C^s(\bar{\Om})$ such that $T(f,S(f))=0$. Also, for small enough $f\in U_{\delta}$ (not necessarily the same $\delta$) and $u_f\in C^{s}(\bar{\Om})$, $S(f)=u_f$ is the only solution of $T(f,u_f)=0$. Moreover, since $S$ is Lipschitz continuous and $S(0)=0$, for $u=S(f)$ we have
		\begin{equation*}
			||u||_{C^s(\bar{\Om})}\leq C||f||_{C^s(\partial\Om)}.
		\end{equation*}
		Also the mapping $\Lambda$ is a well defined $C^{\infty}$ map between $U_{\delta}$ and $C^{s-1}(\partial\Om)$ since taking a normal derivative is a linear map from $C^s(\Om)$ to $C^{s-1}(\partial\Om)$.
	\end{proof}
	
	In order to use the method of higher order linearization, we calculate formally the first and second order linearizations of \eqref{eq F} and the corresponding DN map. This formal looking calculation can be justified as in \cite{MR4188325}. 
	
	Let us begin by assuming that for
	
	\begin{equation}\label{eqFj}
		\left\{\begin{array}{ll}
			F^{j}(x,u,\nabla u,\nabla^2u)=0, & \text{in}\,\, \Om \\
			u=f, & \text{on}\,\, \partial\Om,
		\end{array} \right.
	\end{equation}
	$j=1,2$, we have $\Lambda_{F^{1}}(f)=\Lambda_{F^{2}}(f)$ for all $f\in C^s(\partial\Om)$ with $||f||_{C^s(\partial\Om)}\leq \delta,$ for $\delta>0$ sufficiently small.
	In order to find the linearizations, let $\e_1,\ldots,\e_k$ be sufficiently small numbers and $f_1,\ldots,f_k\in C^s(\partial\Om).$ Let $u_j(x,\e_1,\ldots,\e_k)$ be the unique small solution to
	
	\begin{equation}\label{eqFjeps}
		\left\{\begin{array}{ll}
			F^{j}(x,u_j,\nabla u_j,\nabla^2u_j)=0, & \text{in}\,\, \Om \\
			u_j=\sum_{m=1}^{k}\e_mf_m, & \text{on}\,\, \partial\Om,
		\end{array} \right.
	\end{equation}
	for $j=1,2$. Differentiate this with respect to $\e_l$, $l\in\{1,\ldots,k\}$, and evaluate at $\e_1=\ldots=\e_k=0$ to get
	\begin{equation}\label{lineqFjeps}
		\left\{\begin{array}{ll}
			\partial_uF^{j}(x,0,0,0)v_j^l+\nabla_pF^{j}(x,0,0,0)\cdot \nabla v_j^l+\nabla_PF^{j}(x,0,0,0):\nabla^2v_j^l=0, & \text{in}\,\, \Om \\
			v_j^l=f_l, & \text{on}\,\, \partial\Om,
		\end{array} \right.
	\end{equation}
	where $v_j^l:=\partial_{\e_l}u_j(x,\e_1,\ldots,\e_k)\big|_{\e_1=\ldots=\e_k=0}$. The boundary value problem \eqref{lineqFjeps} has a unique solution if we assume that the map 
	\[v\mapsto L(v)=\partial_uF^j(x,0,0,0)v+\nabla_pF^j(x,0,0,0)\cdot \nabla v+\nabla_PF^j(x,0,0,0): \nabla^2v\] 
	is injective on $H^1_0(\Om)$ and assume strict ellipticity of the operator $L$. At this point, we would like to see what exactly is the first linearization and see if some information can be recovered about the coefficents $\partial_uF^j(x,0,0,0)$, $\nabla_pF^j(x,0,0,0)$, $\nabla_PF^j(x,0,0,0)$ from the knowledge of the DN maps corresponding to \eqref{eqFj} for $j=1,2$. What actually can be recovered depends on the equation at hand.
	
	Let us next differentiate \eqref{eqFjeps} first with respect to $\e_l$ and then with respect to $\e_a, a\neq l$:
	\begin{align*}
		I_j:=&\:\partial_{\e_a\e_l}^2F^{j}(x,u_j,\nabla u_j,\nabla^2u_j)\\
		=&\:\partial_{\e_a}\left( \partial_uF^{j}(x,u_j,\nabla u_j,\nabla^2u_j)\partial_{\e_l}u_j\right)\\
		+&\:\partial_{\e_a}\left(\sumn\partial_{p_i}F^{j}(x,u_j,\nabla u_j,\nabla^2u_j)\partial_{x_i}\partial_{\e_l}u_j\right)\\
		+&\:\partial_{\e_a}\left(\sumjk\partial_{P_{jk}}F^{j}(x,u_j,\nabla u_j,\nabla^2u_j)\partial_{x_jx_k}^2\partial_{\e_l}u_j\right)\\
		:=&\:I_{j,1}+I_{j,2}+I_{j,3}.
	\end{align*}
	Then we expand these one by one:
	\begin{align*}
		I_{j,1}&=\partial_uF^{j}\partial_{\e_a\e_l}^2u_j+\partial_u^2F^{j}\partial_{\e_a}u_j\partial_{\e_l}u_j+\sumn\partial_{p_i}\partial_uF^{j}\partial_{x_i}\partial_{\e_a}u_j\partial_{\e_l}u_j\\
		&+\sumjk\partial_{P_{jk}}\partial_uF^{j}\partial_{x_jx_k}^2\partial_{\e_a}u_j\partial_{\e_l}u_j,
	\end{align*}
	
	\begin{align*}
		I_{j,2}&=\sumn\Bigg(\partial_{p_i}F^{j}\partial_{x_i}\partial_{\e_a\e_l}^2u_j+\partial_u\partial_{p_i}F^{j}\partial_{\e_a}u_j\partial_{x_i}\partial_{\e_l}u_j\\
		&+\sum_{r=1}^{n}\partial_{p_r}\partial_{p_i}F^{j}\partial_{x_r}\partial_{\e_a}u_j\partial_{x_i}\partial_{\e_l}u_j+\sumjk\partial_{P_{jk}}\partial_{p_i}F^{j}\partial_{x_jx_k}^2\partial_{\e_a}u_j\partial_{x_i}\partial_{\e_l}u_j\Bigg),
	\end{align*}
	
	\begin{align*}
		I_{j,3}&=\sumjk\Bigg( \partial_{P_{jk}}F^j\partial_{x_jx_k}^2\partial_{\e_a\e_l}^2u_j+\partial_u\partial_{P_{jk}}F\partial_{\e_a}u_j\partial_{x_jx_k}^2\partial_{\e_l}u_j\\
		&+\sumn\partial_{p_i}\partial_{P_{jk}}F^j\partial_{x_i}\partial_{\e_a}u_j\partial_{x_jx_k}^2\partial_{\e_l}u_j+\sum_{r,t=1}^{n}\partial_{P_{rt}}\partial_{P_{jk}}F^j\partial_{x_rx_t}^2\partial_{\e_a}u_j\partial_{x_jx_k}^2\partial_{\e_l}u_j\Bigg).
	\end{align*}
	Evaluate $I_j$ at $\e_1=\ldots=\e_k=0$ and denote $w_j^{(al)}=(\partial_{\e_a\e_l}^2u_j)(x,\e_1,\ldots,\e_k)|_{\e_1=\ldots=\e_k=0}$ to have
	\begin{align}\label{secondlinFjeps}
		I_j&=\partial_uF^j(x,0,0,0)w_j^{(al)}+\partial_u^2F^j(x,0,0,0)v^lv^a\\\notag
		&+\left(\big(\nabla_p(\partial_uF^j)\big)(x,0,0,0)\cdot \nabla v^a+\big(\nabla_P(\partial_uF^j)\big)(x,0,0,0):\nabla^2v^a\right)v^l\\\notag
		&+\nabla_pF^j(x,0,0,0)\cdot \nabla w_j^{(al)}+\big(\nabla_p(\partial_uF^j)\big)(x,0,0,0)\cdot \nabla v^lv^a\\\notag
		&+\sumn\left(\big(\nabla_p(\partial_{p_i}F^j)\big)(x,0,0,0)\cdot \nabla v^a+\big(\nabla_P(\partial_{p_i}F^j)\big(x,0,0,0):\nabla^2v^a\right)\partial_{x_i}v^l\\\notag
		&+\nabla_PF^j(x,0,0,0): \nabla^2w_j^{(al)}+\big(\nabla_P(\partial_uF^j)\big)(x,0,0,0): \nabla^2v^lv^a\\\notag
		&+\sumjk\left(\big(\nabla_p(\partial_{P_{jk}}F^j)\big)\cdot \nabla v^a+\big(\nabla_P(\partial_{P_{jk}}F^j)\big)(x,0,0,0):\nabla^2v^a\right)\partial_{x_ix_j}^2v^l
	\end{align}
	Thus $w_j^{(al)}$ satisfies the boundary value problem
	\begin{equation}\label{secondlineqFjeps}
		\left\{\begin{array}{ll}
			I_j=0, & \text{in}\,\, \Om \\
			w_j^{(al)}=0, & \text{on}\,\, \partial\Om.
		\end{array} \right.
	\end{equation}
	Next we would like to integrate $I_1-I_2$ against a solution to the adjoint of
	\begin{equation*}
		\partial_uF^{j}(x,0,0,0)v_j^l+\nabla_pF^{j}(x,0,0,0)\cdot \nabla v_j^l+\nabla_PF^{j}(x,0,0,0):\nabla^2v_j^l=0
	\end{equation*}
	and use the assumption that the DN maps associated to \eqref{eqFj} coincide for $j=1,2$ together with a completeness result to recover information about the coefficients of $I_1$ and $I_2$. Again the information that can be recovered depends on the equation and below this method is applied in the case of the minimal surface equation.
	
	What we would do next is to use an induction argument to show that from higher order linearizations it is possible to recover more information. This too will be specified below.

	\section{Mimimal surface equation on a Riemannian manifold}\label{section derivation}
	
	In this section we derive the equation \eqref{minsurfeq}. Let $(M,g),$ $M=\Rnn$, $n\geq3$, be a Riemannian manifold with the metric
	\begin{equation}\label{metricg}
		g_{ij}(x',x_n)=c(x',x_n)\delta_{ij},
	\end{equation}
	where
	\begin{equation*}\label{metricc}
		(x',x_n)\in\R^{n-1}\times\R,\; c\in C^{\infty}(\R^n),\; c(x)>0\quad\text{for all}\quad x\in\R^n.
	\end{equation*}
	These assumptions are valid for the rest of the article, unless otherwise stated.
	
	Let $u\colon \Om\subset\R^{n-1}\to\R,$ $u\in C^2(\bar{\Om})$, and consider the graph of the function $u$
	\begin{equation*}
		\text{Graph}_u=\{(x',u(x'))\colon x'\in\Om\}\subset M.
	\end{equation*}
	This graph is a minimal surface if and only if its mean curvature $H$ is equal to zero at all points on the graph. By defining 
	\begin{equation*}
		f\colon\Om\times\R\to\R,\quad f(x',x_n)=x_n-u(x'),
	\end{equation*}
	the graph of $u$ is the surface 
	\begin{equation*}
		\Sigma:=\{(x',x_n)\in\Om\times\R : f(x',x_n)=0\}.
	\end{equation*}
	The mean curvature of $\Sigma$ at $x\in\Sigma$ is the sum of principal curvatures. We omit the normalizing factor $\frac{1}{n-1}$ when calculating the mean curvature. In order to calculate the principal curvatures, we introduce the Riemannian gradient and Hessian of a function $f\colon M\to\R$:
	\begin{align*}
		\nabla_gf=g^{ij}\partial_{x_i}f\partial_{x_j},\quad \nabla_g^2f=\left(\partial^2_{x_ix_j}f-\chr{m}{ij}\partial_{x_m}f\right)_{i,j=1}^n,
	\end{align*}
	where $g^{ij}$ is the inverse of $g_{ij}$ and $\chr{m}{ij}=\frac{1}{2}g^{ml}(\partial_{x_i}g_{jl}+\partial_{x_j}g_{il}-\partial_{x_l}g_{ij})$ is the Christoffel symbol related to the metric $g$.
	Define also the Laplace-Beltrami operator, which is a trace of the Hessian (this is one way of defining it), and the norm of the gradient:
	\begin{equation*}
		\Delta_gf=\Tr(\nabla_g^2f)=g^{ij}\left(\partial^2_{x_ix_j}f-\chr{m}{ij}\partial_{x_m}f\right),\quad |\nabla_gf|_g^2=g^{ij}\partial_{x_i}f\partial_{x_j}f .
	\end{equation*}
	Now the principal curvatures of $\Sigma$ at $x\in\Sigma$ are the eigenvalues of $\nabla_g^2f(x)$ restricted to the tangent space $T_x\Sigma$ at $x$. Since $\frac{\nabla_g f(x)}{|\nabla_g f(x)|_g}$ is a normal to $\Sigma$ at the point $x$, we have $T_x\Sigma=\{\nabla_g f(x)\}^{\perp}$, or in other words, the tangent space $T_x\Sigma$ is the orthogonal complement of the vector $\nabla_g f(x)$. 
	
	Let $\{E_1,\ldots,E_{n-1}\}$ be an $g$-orthonormal basis of $T_x\Sigma$. Then $\left\{E_1,\ldots,E_{n-1},\frac{\nabla_g f(x)}{|\nabla_g f(x)|_g}\right\}$ is an orthonormal basis of $\Rnn$. Now the mean curvature of $\Sigma$ at $x\in\Sigma$ is the trace of $\nabla_g^2f(x)|_{\{\nabla_g f(x)\}^{\perp}}$:
	\begin{align*}
		H(x)&=\sum_{i=1}^{n-1}\langle\nabla_g^2f(x)E_i,E_i\rangle\\
		&=\sum_{i=1}^{n-1}\langle\nabla_g^2f(x)E_i,E_i\rangle+\left(\nabla_g^2f(x)\right)\left(\frac{\nabla_g f(x)}{|\nabla_g f(x)|_g},\frac{\nabla_g f(x)}{|\nabla_g f(x)|_g}\right)\\
		&-\left(\nabla_g^2f(x)\right)\left(\frac{\nabla_g f(x)}{|\nabla_g f(x)|_g},\frac{\nabla_g f(x)}{|\nabla_g f(x)|_g}\right)\\
		&=\Tr(\nabla_g^2f(x))-|\nabla_gf(x)|_g^{-2}\left(\nabla_g^2f(x)\right)\left(\nabla_g f(x),\nabla_g f(x)\right)\\
		&=\Delta_gf(x)-|\nabla_gf(x)|_g^{-2}\left(\nabla_g^2f(x)\right)\left(\nabla_g f(x),\nabla_g f(x)\right).
	\end{align*}
	Thus $\text{Graph}_u$ is a minimal surface if and only if 
	\begin{equation}\label{general minsurfeq}
		|\nabla_gf(x)|_g^2\Delta_gf(x)-\left(\nabla_g^2f(x)\right)\left(\nabla_g f(x),\nabla_g f(x)\right)=0\quad\text{for all}\va x\in\text{Graph}_u.
	\end{equation}
	
	Next we will calculate the minimal surface equation more explicitly using the conformally Euclidean metric \eqref{metricg}. Now $g^{ij}=c^{-1}\delta_{ij}$ is the inverse matrix of \eqref{metricg} and thus
	\begin{align*}
		\nabla_gf=c^{-1}\sum_{j=1}^n\partial_{x_j}f\partial_{x_j},\quad |\nabla_gf|_g^2=c^{-1}\sum_{i=1}^n\partial_{x_i}f\partial_{x_i}f.
	\end{align*}
	Also the Christoffel symbol can be simplified by letting $\lambda=\frac{1}{2}\log c$ and hence $\partial_{x_i}\lambda=\frac{1}{2}c^{-1}\partial_{x_i}c$. Then
	\begin{align*}
		\chr{m}{ij}&=\frac{1}{2}g^{ml}\left(\partial_{x_i}g_{jl}+\partial_{x_j}g_{il}-\partial_{x_l}g_{ij}\right)\\
		&=\frac{1}{2}c^{-1}\left(\partial_{x_i}c\delta_{jm}+\partial_{x_j}c\delta_{im}-\partial_{x_m}c\delta_{ij}\right)\\
		&=\partial_{x_i}\lambda\delta_{jm}+\partial_{x_j}\lambda\delta_{im}-\partial_{x_m}\lambda\delta_{ij}.
	\end{align*}
	Let us next calculate the two parts of \eqref{general minsurfeq} separately, starting from
	\begin{align*}
		c^2|\nabla_gf|_g^2\Delta_gf
		&=\left(\sum_{i=1}^n\partial_{x_i}f\partial_{x_i}f\right)\left(\sum_{i=1}^n\partial^2_{x_ix_i}f-\sum_{m=1}^n\left(\sum_{i,j=1}^{n}\delta_{ij}\chr{m}{ij}\right)\partial_{x_m}f\right)\\
		&=\left(\sum_{i=1}^n\partial_{x_i}f\partial_{x_i}f\right)\left(\sum_{i=1}^n\partial^2_{x_ix_i}f-(2-n)\sum_{m=1}^n\partial_{x_m}\lambda\partial_{x_m}f\right),
	\end{align*}
	and the other part becomes
	\begin{align*}
		&c^2\nabla_g^2f(\nabla_gf,\nabla_gf)\\
		&=\left(\partial_{x_ix_j}^2f-\chr{m}{ij}\partial_{x_m}f\right) cg^{ia}\partial_{x_a}fcg^{jb}\partial_{x_b}f\\
		&=\sum_{i,j=1}^n\left(\partial_{x_ix_j}^2f-\sum_{m=1}^n\left(\partial_{x_i}\lambda\delta_{jm}+\partial_{x_j}\lambda\delta_{im}-\partial_{x_m}\lambda\delta_{ij}\right)\partial_{x_m}f\right)\partial_{x_i}f\partial_{x_j}f\\
		&=\sum_{i,j=1}^n\left(\partial_{x_ix_j}^2f-\partial_{x_i}\lambda\partial_{x_j}f-\partial_{x_j}\lambda\partial_{x_i}f+\left(\sum_{m=1}^n\partial_{x_m}\lambda\partial_{x_m}f\right)\delta_{ij}\right)\partial_{x_i}f\partial_{x_j}f\\
		&=\sum_{i,j=1}^n\left(\partial_{x_ix_j}^2f-2\partial_{x_i}\lambda\partial_{x_j}f\right)\partial_{x_i}f\partial_{x_j}f+\left(\sum_{i=1}^n\partial_{x_i}f\partial_{x_i}f\right)\left(\sum_{m=1}^n\partial_{x_m}\lambda\partial_{x_m}f\right).
	\end{align*}
	Now
	\begin{align*}
		&c^2|\nabla_gf|_g^2\Delta_gf-c^2\nabla_g^2f\left(\nabla_g f,\nabla_g f\right)\\
		&=\left(\sum_{i=1}^n\left(\partial_{x_i}f\right)^2\right)\left(\sum_{i=1}^n\partial^2_{x_ix_i}f+(n-3)\sum_{m=1}^n\partial_{x_m}\lambda\partial_{x_m}f\right)\\
		&-\sum_{i,j=1}^n\left(\partial_{x_ix_j}^2f-2\partial_{x_i}\lambda\partial_{x_j}f\right)\partial_{x_i}f\partial_{x_j}f.
	\end{align*}
	Plugging the above to \eqref{general minsurfeq}, we get that $\Sigma$ is a minimal surface if and only if
	\begin{align}\label{minsurfeqf}
		&\left(\sum_{i=1}^n\left(\partial_{x_i}f\right)^2\right)\left(\sum_{i=1}^n\partial^2_{x_ix_i}f+(n-3)\sum_{m=1}^n\partial_{x_m}\lambda\partial_{x_m}f\right)\\
		&-\sum_{i,j=1}^n\left(\partial_{x_ix_j}^2f-2\partial_{x_i}\lambda\partial_{x_j}f\right)\partial_{x_i}f\partial_{x_j}f=0\notag
	\end{align}
	for $x\in\Sigma$.
	
	Insert next $f(x',x_n)=x_n-u(x')$ to the above in order to get an equation in terms of the function $u$. Then the first line of \eqref{minsurfeqf} becomes (note that $\partial_{x_n}u=0$)
	\begin{align*}
		&\left(\sum_{i=1}^n\delta_{in}-2\delta_{in}\partial_{x_i}u+\left(\partial_{x_i}u\right)^2\right)\left(-\sum_{i=1}^n\partial^2_{x_ix_i}u+(n-3)\sum_{m=1}^n\partial_{x_m}\lambda(\delta_{mn}-\partial_{x_m}u)\right)\\
		&=\left(1+|\nabla u|^2\right)\left(-\Delta u+(n-3)(\partial_{x_n}\lambda-\nabla_{x'}\lambda\cdot\nabla u)\right).
	\end{align*}
	The second line is equal to
	\begin{align*}
		&-\sum_{i,j=1}^n(-\partial_{x_ix_j}^2u-2\partial_{x_i}\lambda(\delta_{jn}-\partial_{x_j}u))(\delta_{in}\delta_{jn}-\delta_{in}\partial_{x_j}u-\delta_{jn}\partial_{x_i}u+\partial_{x_i}u\partial_{x_j}u)\\
		&=\sum_{i,j=1}^n\partial_{x_ix_j}^2u\partial_{x_i}u\partial_{x_j}u+2\partial_{x_n}\lambda-2\sum_{i=1}^{n-1}\partial_{x_i}\lambda\partial_{x_i}u+2\sum_{j=1}^{n-1}\partial_{x_n}\lambda(\partial_{x_j}u)^2\\
		&-2\sum_{i,j=1}^{n-1}\partial_{x_i}\lambda\partial_{x_i}u(\partial_{x_j}u)^2\\
		&=\nabla u^T\nabla^2u\nabla u+2\partial_{x_n}\lambda-2\nabla_{x'}\lambda\cdot\nabla u+2\partial_{x_n}\lambda|\nabla u|^2-2\nabla_{x'}\lambda\cdot\nabla u|\nabla u|^2\\
		&=\nabla u^T\nabla^2u\nabla u + 2\left(\partial_{x_n}\lambda-\nabla_{x'}\lambda\cdot\nabla u\right)(1+|\nabla u|^2).
	\end{align*}
	Combining these two, we get that $\text{Graph}_u$ is a minimal surface if and only if the function $u$ satisfies the following minimal surface equation
	\begin{equation}\label{minsurfequ}
		-\Delta u+\frac{\nabla u^T\nabla^2u\nabla u}{1+|\nabla u|^2}-\frac{n-1}{2c(x',u(x'))}\left(\nabla_{x'}c(x',u(x'))\cdot\nabla u-\partial_{x_n}c(x',u(x'))\right)=0
	\end{equation}
	for all $x'\in\Om$. Multiplying both sides with $(1+|\nabla u|^2)^{-1/2}$ gives
	\begin{align*}
		&-\dive\left(\frac{\nabla u}{(1+|\nabla u|^2)^{1/2}}\right) - \sum_{i,j=1}^{n-1}\frac{\partial_{x_j}u}{(1+|\nabla u|^2)^{1/2}}\frac{\partial_{x_j}c\delta_{ii}}{2c} + \frac{(n-1)\partial_{x_n}c}{2c(1+|\nabla u|^2)^{1/2}}\\
		&=-\dive\left(\frac{\nabla u}{(1+|\nabla u|^2)^{1/2}}\right) - \sum_{i,j=1}^{n-1}\frac{\partial_{x_j}u}{(1+|\nabla u|^2)^{1/2}}\chr{i}{ij} + \frac{(n-1)\partial_{x_n}c}{2c(1+|\nabla u|^2)^{1/2}}\\
		&=-\dive_{g_{n-1}}\left(\frac{\nabla u}{(1+|\nabla u|^2)^{1/2}}\right) + \frac{(n-1)\partial_{x_n}c}{2c(1+|\nabla u|^2)^{1/2}}\\
		&=0.
	\end{align*}
	In the Euclidean setting, $c\equiv1$, this is the more familiar Euclidean minimal surface equation.

	\section{Preliminaries for the higher order linearization}\label{section minimal surface}
	
	In this section we use the method of higher order linearization on the minimal surface equation derived in the previous section. From now on, assume that $\Om\subset\R^{n-1}$ is a bounded domain. Let us start by looking at the assumptions of Proposition \ref{well-pos prop}, where it is assumed that $u\equiv0$ is a solution to \eqref{minsurfequ}. This leads to the condition that
	\begin{equation}\label{c oletus}
		\partial_{x_n}c(x',0)=0,\quad x'\in\Om,
	\end{equation}
	which can be seen as follows. For a constant function $u\colon\Om\to\R, u(x')=d$ to be a solution to \eqref{minsurfequ} is equivalent with
	\begin{equation*}
		-\frac{n-1}{2c(x',d)}\partial_{x_n}c(x',d)=0\quad\text{for all}\va x'\in\Om,
	\end{equation*}
	which is equivalent with $\partial_{x_n}c(x',d)=0$ for all $x'\in\Om$. The assumption \eqref{c oletus} comes by setting $d=0$.
	
	Next we will focus on the boundary value problem \eqref{eq F} in the setting described above and calculate the first and second order linearizations of \eqref{eq F} and the corresponding linearizations of the DN map. This could be done directly from \eqref{minsurfequ} but we will follow the general method in Section \ref{section well-posedness} and begin by defining a function $F\colon \R^{n^2}\to\R$,
	\begin{align*}\label{Fminsurf}
		F(x',u,p,P):=&\:-\sum_{i=1}^{n-1}P_{ii} - \frac{n-1}{2c(x',u)}\left(\sum_{i=1}^{n-1}p_i\partial_{x_i}c(x',u)-\partial_{x_n}c(x',u)\right) \\\notag
		&+ \frac{1}{1+|p|^2}\sum_{i,j=1}^{n-1}P_{ij}p_ip_j.\notag
	\end{align*}
	Here $p=(p_1,\ldots,p_{n-1})\in\R^{n-1}, P=(P_{ij})$ is an $(n-1)\times(n-1)$ matrix and $x'\in\R^{n-1}$. Then \eqref{minsurfequ} is equivalent with $F(x',u,\nabla u,\nabla^2u)=0$ for all $x'\in\Om$.

	Let us start with the first linearization. For this, let $x'\in\Om$. As shown in Section \ref{section well-posedness}, we need to differentiate $F$ with respect to $u,p$ and $P$. The first derivatives with respect to variable $P$ are
	\begin{equation*}\label{eka deri P}
		\partial_{P_{kl}}F(x',u,p,P)=-\delta_{kl}+\frac{p_kp_l}{1+|p|^2}.
	\end{equation*}
	When evaluated at $\e_1=\ldots=\e_k=0$, we get
	\begin{equation*}\label{eka P deriv}
		\partial_{P_{kl}}F(x',0,0,0)=-\delta_{kl}.
	\end{equation*}

	Next calculation is $\nabla_pF$:
	\begin{align*}\label{eka deri p}
		\partial_{p_k}F(x',u,p,P)&=-\frac{n-1}{2c}\partial_{x_k}c-\frac{2p_k}{(1+|p|^2)^2}\sum_{i,j=1}^{n-1}P_{ij}p_ip_j\\
		&+\frac{1}{1+|p|^2}\sum_{i,j=1}^{n-1}P_{ij}(\delta_{ik}p_j+\delta_{jk}p_i)\\
		&=-\frac{n-1}{2c}\partial_{x_k}c-\frac{2p_k}{(1+|p|^2)^2}\sum_{i,j=1}^{n-1}P_{ij}p_ip_j\\
		&+\frac{1}{1+|p|^2}\left(\sum_{j=1}^{n-1}P_{kj}p_j+\sum_{i=1}^{n-1}P_{ik}p_i\right).
	\end{align*}
	Setting $\e_1=\ldots=\e_k=0$, this becomes
	\begin{equation*}\label{eka p deriv}
		\partial_{p_k}F(x',0,0,0)=-\frac{n-1}{2c(x',0)}\partial_{x_k}c(x',0).
	\end{equation*}
	Since the solution operator $S$ from Proposition \ref{well-pos prop} is smooth, the solution $u(x',\e)$ depends smoothly on $\e$ and thus $u(x',\e)\big|_{\e_1=\ldots=\e_k=0}=0$. Hence the coordinate $x_n$ is $0$ since we are on the graph of $u$.
	
	What is left to calculate is the derivative $\partial_uF$:
	\begin{align*}
		\partial_uF(x',u,p,P)&=-\frac{n-1}{2c^2}(\partial_{x_n}c)^2+\frac{n-1}{2c}\partial_{x_n}^2c\\
		&+\frac{n-1}{2c^2}\partial_{x_n}c\sum_{i=1}^{n-1}\partial_{x_i}cp_i-\frac{n-1}{2c}\sum_{i=1}^{n-1}\partial_{x_i}\partial_{x_n}cp_i.
	\end{align*} 
	Hence, when evaluated at $\e_1=\ldots=\e_k=0$
	\begin{equation*}
		\partial_uF(x',0,0,0)=\frac{n-1}{2c(x',0)}\partial_{x_n}^2c(x',0),
	\end{equation*}
	since $\partial_{x_n}c(x',0)=0$.
	
	In Theorem \ref{main thm} the condition $\partial_{x_n}^2c(x',0)=0$ is assumed and thus
	\begin{align}
		\partial_uF(x',0,0,0)&=0.
	\end{align}
	Now the first linearization \eqref{lineqFjeps} is the following boundary value problem
	\begin{equation}\label{firstlinminimal 1}
		\left\{\begin{array}{lll}
			\Delta v^l+\dfrac{n-1}{2c(x',0)}\nabla_{x'}c(x',0)\cdot \nabla v^l=0, & \text{in}\,\, \Om \\
			v^l=f_l, & \text{on}\,\, \partial\Om. \phantom{\Big|}
		\end{array} \right.
	\end{equation}
	By multiplying the first equation in \eqref{firstlinminimal 1} with $c(x',0)^{\frac{n-1}{2}}$ we see that \eqref{firstlinminimal 1} is equivalent with
	\begin{equation*}\label{firstlinminimal 2}
		\left\{\begin{array}{ll}
			\dive\left(c(x',0)^{\frac{n-1}{2}} \nabla v^l\right)=0, & \text{in}\,\, \Om \\
			v^l=f_l, & \text{on}\,\, \partial\Om.
		\end{array} \right.
	\end{equation*}
	Hence the first linearization of the DN-map \eqref{DN minimal} at $f=0$ is 
	\begin{equation}\label{DNminimal firstlin}
		(D\Lambda_c)_0\colon C^s(\partial\Om)\to C^{s-1}(\partial\Om),\quad f\mapsto \partial_{\nu}v_f\big|_{\partial\Om}.
	\end{equation}
	In dimension $3-1=2$ one can recover $c(x',0)$ up to a multiplicative constant using a boundary determination result (\cite{Brown2006}) together eith the knowledge of the partial DN-map (\cite{Imanuvilov2010}). When $n>3$, $c(x',0)$ can be recovered, again up to a multiplicative constant, combining the same boundary determination result and the DN map (\cite{MR873380}) or the partial DN map (when $\Om$ is as described in Theorem \ref{main thm} part $(2)$ \cite{Isakov2007}). Details will be shown in the proof of Theorem \ref{main thm}.

	For the second linearization, as can be seen from \eqref{secondlineqFjeps}, second derivatives of the map $F$ need to be calculated. Firstly
	\begin{align*}
		\partial_{P_{rs}}\partial_{P_{kl}}F(x',u,p,P)&=0, \\
		\partial_{P_{kl}}\partial_uF(x',u,p,P)&=\partial_u\partial_{P_{kl}}F(x',u,p,P)=0
	\end{align*}
	and hence, when evaluated at $\e_1=\ldots=\e_k=0$, these vanish. Let us next calculate other mixed derivatives. Now
	\begin{equation*}
		\partial_{p_s}\partial_{P_{kl}}F(x',u,p,P)=\frac{-2p_{s}}{(1+|p|^2)^2}p_kp_l +\frac{1}{1+|p|^2}(\delta_{ks}p_l+\delta_{ls}p_k)
	\end{equation*}
	and when evaluated at $\e_1=\ldots=\e_k=0$
	\begin{equation*}
		\partial_{p_s}\partial_{P_{kl}}F(x',0,0,0)=\partial_{P_{kl}}\partial_{p_s}F(x',0,0,0)=0.
	\end{equation*}
	Also
	\begin{align*}
		\partial_{p_k}\partial_uF(x',u,p,P)=\frac{n-1}{2c^2}\partial_{x_n}c\partial_{x_k}c-\frac{n-1}{2c}\partial_{x_k}\partial_{x_n}c.
	\end{align*}
	Setting $\e_1=\ldots=\e_k=0$ we have
	\begin{align*}
		\partial_{p_k}\partial_uF(x',0,0,0)=\partial_u\partial_{p_k}F(x',0,0,0)=0,
	\end{align*}
	since $\partial_{x_k}\partial_{x_n}c(x',0)=0$ for $k=1,\ldots,n-1$.
	
	What is left are the second derivatives with respect to the variables $p$ and $u$. Let us start from the variable $p$:
	\begin{align*}
		\partial_{p_s}\partial_{p_k}F(x',u,p,P)&=\frac{-2\delta_{sk}}{(1+|p|^2)^2}\sum_{i,j=1}^{n-1}P_{ij}p_ip_j\\
		&- 2p_k\left(\frac{-4p_s}{(1+|p|^2)^3}\sum_{i,j=1}^{n-1}P_{ij}p_ip_j+\frac{1}{(1+|p|^2)^2}\sum_{i,j=1}^{n-1}P_{ij}(\delta_{is}p_j+\delta_{js}p_i)\right)\\
		&-\frac{2p_s}{(1+|p|^2)^2}\left(\sum_{j=1}^{n-1}P_{kj}p_j+\sum_{i=1}^{n-1}P_{ik}p_i\right)+\frac{1}{1+|p|^2}(P_{ks}+P_{sk}).
	\end{align*}
	Hence when evaluating at $\e_1=\ldots=\e_k=0$
	\begin{align*}\label{toka deri p}
		\partial_{p_r}\partial_{p_k}F(x',0,0,0)=0.
	\end{align*}
	
	For the variable $u$ the second derivative reads
	\begin{align*}
		\partial_u\partial_uF(x',u,p,P)&=\frac{n-1}{c^3}(\partial_{x_n}c)^3-\frac{3(n-1)}{2c^2}\partial_{x_n}c\partial_{x_n}^2c\\
		&+\frac{n-1}{2c}\partial_{x_n}^3c
		-\frac{n-1}{c^2}(\partial_{x_n}c)^2\sum_{i=1}^{n-1}\partial_{x_i}cp_i\\
		&+\frac{n-1}{2c}\left(\partial_{x_n}^2c\sum_{i=1}^{n-1}\partial_{x_i}cp_i+\partial_{x_n}c\sum_{i=1}^{n-1}\partial_{x_i}\partial_{x_n}cp_i\right)\\
		&+\frac{n-1}{2c^2}\partial_{x_n}c\sum_{i=1}^{n-1}\partial_{x_i}\partial_{x_n}cp_i-\frac{n-1}{2c}\sum_{i=1}^{n-1}\partial_{x_i}\partial_{x_n}^2cp_i.
	\end{align*}
	Thus, letting at $\e_1=\ldots=\e_k=0$
	\begin{align*}
		\partial_u^2F(x',0,0,0)=\frac{n-1}{2c(x',0)}\partial_{x_n}^3c(x',0).
	\end{align*}

	Let us plug the calculated derivatives in \eqref{secondlinFjeps} to find out what is the second linearization:
	\begin{equation*}
		I=\tfrac{n-1}{2c(x',0)}\partial_{x_n}^2c(x',0)w^{(al)} + \tfrac{n-1}{2c(x',0)}\partial_{x_n}^3c(x',0)v^lv^a - \tfrac{n-1}{2c(x',0)}\nabla_{x'}c(x',0)\cdot\nabla w^{(al)}-\Delta w^{(al)}.
	\end{equation*}
	Here $v^a, v^l$ satisfy \eqref{firstlinminimal 1} with corresponding boundary values.
	Now the function $w^{(al)}=(\partial_{\e_a\e_l}^2u)(x',0,\ldots,0)$ solves
	\begin{equation}\label{secondlineqminimal1}
		\left\{\begin{array}{ll}
			\Delta w^{(al)}+\frac{n-1}{2c(x',0)}\nabla_{x'}c(x',0)\cdot\nabla w^{(al)}\\
			+\frac{1-n}{2c(x',0)}\partial_{x_n}^2c(x',0)w^{(al)}+\frac{1-n}{2c(x',0)}\partial_{x_n}^3c(x',0)v^lv^a=0, & \text{in}\,\, \Om \\
			w^{(al)}=0, & \text{on}\,\, \partial\Om. \phantom{\Big|}
		\end{array} \right.
	\end{equation}
	In Theorem \ref{main thm} there is an assumption that $\partial_{x_n}^2c(x',0)=0$ and thus the term $\frac{n-1}{2c(x',0)}\partial_{x_n}^2c(x',0)w^{(al)}$ vanishes. Now the boundary value problem \eqref{secondlineqminimal1} is equivalent with
	\begin{equation}\label{secondlineqminimal2}
		\left\{\begin{array}{ll}
			\dive \left(c(x',0)^{\frac{n-1}{2}}\nabla w^{(al)}\right)-\frac{n-1}{2}c(x',0)^{\frac{n-1}{2}-1}\partial_{x_n}^3c(x',0)v^lv^a=0, & \text{in}\,\, \Om \\
			w^{(al)}=0, & \text{on}\,\, \partial\Om. \phantom{\Big|}
		\end{array} \right.
	\end{equation}

	\section{Proof of Theorem \ref{main thm}}\label{section proof of main}
	
	Now we use the method of higher order linearization to prove our main result. This will also make use of the linearizations calculated in the previous section. Before the proof we state a proposition which says that products of solutions to the Schrödinger equation form a complete set in $L^1(\Om)$ for $n\geq2$ (when $n\geq3$ \cite{MR873380}, when $n=2$ \cite{MR2387648}, see also \cite{blaasten2011inverse} and \cite[Proposition 2.1]{MR4269409} where this result is stated in the following form).
	
	\begin{prop}\label{cgo}
		Let $\Om\subset\Rnn, n\geq2,$ be a bounded domain with $C^{\infty}$ boundary, $q_1,q_2\in C^{\infty}(\bar{\Om})$ and let $f\in L^{\infty}(\Om).$ Assume that
		\begin{equation*}
			\int_{\Om}fv_1v_2\,dx=0,
		\end{equation*}
		for all $v_j$ solving $(-\Delta + q_j)v_j=0$ in $\Om$. Then $f\equiv0$ in $\Om$.
	\end{prop}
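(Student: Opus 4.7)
The plan is to invoke Bukhgeim's complex geometric optics (CGO) construction for the two-dimensional Schrödinger equation. Identifying $\R^2$ with $\C$ via $z = x_1 + ix_2$, for each point $z_0 \in \Om$ and each small $h > 0$ I would build solutions $v_j$ to $(-\Delta + q_j)v_j = 0$, $j=1,2$, of the form
\[
v_1 = e^{\Phi/h}(a_1 + r_1), \qquad v_2 = e^{-\overline{\Phi}/h}(a_2 + r_2),
\]
where $\Phi(z) = (z-z_0)^2/2$ is holomorphic with a unique nondegenerate critical point at $z_0$, the amplitudes $a_j$ are chosen to satisfy the leading-order transport equations with $a_j(z_0) \neq 0$ (for instance, $a_1$ holomorphic and $a_2$ antiholomorphic, normalized at $z_0$), and the remainders $r_j$ decay to $0$ in $L^2(\Om)$ as $h \to 0^+$ at a rate that will be made precise by the construction.

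Substituting these solutions into the hypothesis and using that $\Phi - \overline{\Phi} = 2i\operatorname{Im}\Phi$ is purely imaginary, the integral identity becomes
\[
0 = \int_{\Om} f(x)\, a_1(z)\, a_2(z)\, e^{2i\operatorname{Im}\Phi(z)/h}\, dx + R(h),
\]
where $R(h)$ collects the contributions of the terms involving $r_1$ and $r_2$. Since $\operatorname{Im}\Phi$ has a nondegenerate critical point at $z_0$ (and only there), the stationary phase principle applied to the first integral yields a leading contribution proportional to $h \cdot f(z_0)\, a_1(z_0)\, a_2(z_0)$ as $h \to 0^+$. Provided that $R(h) = o(h)$, dividing by $h$ and passing to the limit forces $f(z_0) = 0$. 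Since $z_0 \in \Om$ was arbitrary and $f \in L^{\infty}(\Om)$, this gives $f \equiv 0$ a.e.\ in $\Om$.

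The main technical obstacle is constructing the remainders $r_j$ with sharp enough decay to guarantee $R(h) = o(h)$ when paired with the oscillatory factor. This is done by conjugating $(-\Delta + q_j)v_j = 0$ by the exponential weight $e^{\pm\Phi/h}$, reducing the problem to a $\bar{\partial}$-equation, and inverting it via a Carleman estimate for the Laplacian with weight $\operatorname{Re}(\Phi/h)$; the two-dimensionality is essential here because the quadratic phase $\Phi$ with an isolated critical point has no counterpart in the standard Sylvester--Uhlmann construction. All of this analytic machinery is carried out in \cite{MR2387648}, and since Proposition \ref{cgo} is stated there in essentially this form, I would invoke it as a black box rather than reproduce the argument in detail.
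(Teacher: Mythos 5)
The paper states Proposition \ref{cgo} without proof, citing it as a known density result from \cite{MR2387648} (together with \cite{blaasten2011inverse} and \cite[Proposition 2.1]{MR4269409}); you do exactly the same, invoking \cite{MR2387648} as a black box, and your sketch of the underlying Bukhgeim quadratic-phase CGO plus stationary-phase argument is a faithful summary of that reference. This matches the paper's approach, so no further comparison is needed.
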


	\begin{proof}[Proof of Theorem \ref{main thm}]
		The assumptions of Proposition \ref{well-pos prop} hold for our case and thus \eqref{mainBVP} is well-posed. 
		Assume now that we have two conformal factors $c_1, c_2$ on the manifold $M$. As in Section \ref{section well-posedness}, let $\e_1,\ldots,\e_{N+1}$ be sufficiently small numbers, $\e=(\e_1,\ldots,\e_{N+1})$, $f_1,\ldots,f_{N+1}\in C^s(\partial\Om)$ and $u_j(x,\e)$ be the unique small solution to 
		\begin{equation*}
			\left\{\begin{array}{ll}
				F^{j}(x,u_j,\nabla u_j,\nabla^2u_j)=0, & \text{in}\,\, \Om \\
				u_j=\sum_{m=1}^{N+1}\e_mf_m, & \text{on}\,\, \partial\Om,
			\end{array} \right.
		\end{equation*}
		for $j=1,2$, where $F^j$ is \eqref{isoF} with $c$ replaced by $c_j$.
		
		The proof now divides to the cases $(1)$and $(2)$ and we will prove first $(1)$. It is the most straightforward of these cases and the other cases are proven similarly with only minor modifications.
		
		\vspace{8pt}
		\textbf{Case $(1)$:} 
		Assume now that
		\begin{equation}\label{DN equality}
			\Lambda_{c_1}(f)=\Lambda_{c_2}(f)
		\end{equation}
		for all $f\in C^s(\partial\Om)$ sufficiently small. Now we have the corresponding $F^1, F^2$ and the first linearization of $\Lambda_{c_j}$ is \eqref{DNminimal firstlin}, with $c(x',0)$ replaced by $c_j(x',0)$, which corresponds to the conductivity equation
		\begin{equation}\label{firstlinminimalproof}
			\left\{\begin{array}{ll}
				\dive\left(c_j(x',0)^{\frac{n-1}{2}} \nabla v_j^l\right)=0, & \text{in}\,\, \Om \\
				v_j^l=f_l, & \text{on}\,\, \partial\Om,
			\end{array} \right.
		\end{equation}
		for $j=1,2$. Using boundary determination from \cite{Brown2006} for the case of Laplacian with a convection term we get for $x'\in\partial\Om$
		\begin{equation*}
			\frac{\nabla_{x'}c_1(x',0)}{c_1(x',0)}=\frac{\nabla_{x'}c_2(x',0)}{c_2(x',0)}\quad\iff\quad\nabla_{x'}\big(\ln(c_1(x',0))\big)=\nabla_{x'}\big(\ln(c_2(x',0))\big).
		\end{equation*}
		From this we get that $\nabla_{x'}(\ln(c_1(x',0))-\ln(c_2(x',0)))=\nabla_{x'}\left(\ln\frac{c_1(x',0)}{c_2(x',0)}\right)=0$ which then implies that $c_1(x',0)=\lambda c_2(x',0)$ for $x'\in\partial\Om$ and $\lambda\neq0$.
		(We can not use boundary determination for the conductivity equation (e.g. \cite{Kohn1984}) because the DN maps are different: here $f\mapsto \partial_{\nu}v_f\big|_{\partial\Om}$ instead of $f\mapsto c_1(x',0)\partial_{\nu}v_f\big|_{\partial\Om}$.) It is known that the knowledge of this linearized DN map combined with $c_1(x',0)|_{\partial\Om}=\lambda c_2(x',0)|_{\partial\Om}$ gives us $c_1(x',0)=\lambda c_2(x',0)$ in $\Om$ (\cite{MR873380}).
		
		By the gauge invariance of \eqref{firstlinminimal 1} (replacing $c_2(x',0)=\lambda^{-1}c_1(x',0)$) we have that $v_j^l$ solve the equation
		\begin{equation*}
			\left\{\begin{array}{ll}
				\dive\left(c_1(x',0)^{\frac{n-1}{2}} \nabla v_j^l\right)=0, & \text{in}\,\, \Om \\
				v_j^l=f_l, & \text{on}\,\, \partial\Om.
			\end{array} \right.
		\end{equation*}
		Since solutions to this are unique, we define $v^l:=v_1^l=v_2^l.$
		
		For recovering the higher order derivatives of $c_j(x',0)$ we can use the second linearizations (from \eqref{secondlineqminimal2})
		\begin{equation}\label{secondlinproof}
			\left\{\begin{array}{ll}
				\dive \left(c_j(x',0)^{\frac{n-1}{2}}\nabla w_j^{(al)}\right)-\frac{n-1}{2}c_j(x',0)^{\frac{n-1}{2}-1}\partial_{x_n}^3c_j(x',0)v^lv^a=0, & \text{in}\,\, \Om \\
				w_j^{(al)}=0, & \text{on}\,\, \partial\Om. \phantom{\Big|}
			\end{array} \right.
		\end{equation}
		corresponding to $j=1,2$. Notice that if we replace $c_2(x',0)$ by $\lambda^{-1}c_1(x',0)$ in \eqref{secondlineqminimal1}, except in the third order derivative, we get that $w_2^{al}$ solves
		\begin{equation}\label{secondlinproof2}
			\left\{\begin{array}{ll}
				\dive \left(c_1(x',0)^{\frac{n-1}{2}}\nabla w_2^{(al)}\right) - \lambda\frac{n-1}{2}c_1(x',0)^{\frac{n-1}{2}-1}\partial_{x_n}^3c_2(x',0)v^lv^a=0, & \text{in}\,\, \Om \\
				w_2^{(al)}=0, & \text{on}\,\, \partial\Om. \phantom{\Big|}
			\end{array} \right.
		\end{equation}
		Subtract now \eqref{secondlinproof} for $j=1$ from \eqref{secondlinproof2}, integrate against $v\equiv1$ (solution to the first linearization) over $\Om$ 
		\begin{align*}
			&\int_{\Om}\dive (c_1(x',0)^{\frac{n-1}{2}}\nabla w_1^{(al)}-c_1(x',0)^{\frac{n-1}{2}}\nabla w_2^{(al)})\\
			&-\left(\frac{n-1}{2}c_1(x',0)^{\frac{n-1}{2}-1}\partial_{x_n}^3c_1(x',0)-\lambda\frac{n-1}{2}c_1(x',0)^{\frac{n-1}{2}-1}\partial_{x_n}^3c_2(x',0)\right)v^lv^a\,dx'\\
			&=0
		\end{align*}
		and use integration by parts to have
		\begin{align*}
			0&=\int_{\partial\Om}c_1(x',0)^{\frac{n-1}{2}}\left(\nabla w_1^{(al)}\cdot\nu-\nabla w_2^{(al)}\cdot\nu\right)\,dS\\
			&=\int_{\Om}\dive (c_1(x',0)^{\frac{n-1}{2}}\nabla w_1^{(al)}-c_2(x',0)^{\frac{n-1}{2}}\nabla w_2^{(al)})\,dx'\\
			&=\int_{\Om}\frac{n-1}{2}c_1(x',0)^{\frac{n-1}{2}-1}\left(\partial_{x_n}^3c_1(x',0)-\lambda\partial_{x_n}^3c_2(x',0)\right)v^lv^a\,dx'.
		\end{align*}
		This is true since by \eqref{DN equality}
		\begin{equation*}
			\partial_{\nu}u_1\big|_{\partial\Om}=\partial_{\nu}u_2\big|_{\partial\Om}
		\end{equation*}
		and applying $\partial_{\e_a}\partial_{\e_l}\big|_{\e=0}$ to this implies
		\begin{equation*}\label{w boundary}
			\partial_{\nu}w_1^{(al)}\big|_{\partial\Om}=\partial_{\nu}w_2^{(al)}\big|_{\partial\Om},\quad\text{for}\va a,l\in\{1,\ldots,k\}.
		\end{equation*}
		
		Thus
		\begin{equation}
			\int_{\Om}c_1(x',0)^{\frac{n-1}{2}-1}\left(\partial_{x_n}^3c_1(x',0)-\lambda\partial_{x_n}^3c_2(x',0)\right)v^lv^a\,dx'=0
		\end{equation}
		for any $v^a, v^l$ solving the conductivity equation \eqref{firstlinminimalproof}. A solution to \eqref{firstlinminimalproof} is equivalently a solution to 
		\begin{equation*}\label{change to schr}
			\left\{\begin{array}{ll}
				\left(-\Delta_{x'}+\frac{\Delta_{x'}c_j(x',0)^{\alpha/2}}{c_j(x',0)^{\alpha/2}}\right)g^l=0, & \text{in}\,\, \Om \\
				g^l=c_j(x',0)^{\alpha/2}f_l, & \text{on}\,\, \partial\Om, \phantom{\Big|}
			\end{array} \right.
		\end{equation*}
		where $\alpha=\frac{n-1}{2}$, $g^l=c_j(x',0)^{\alpha/2}v^l$ and $\Delta_{x'}$ denotes the Laplacian with respect to the first two variables. Hence by using the fact that a product of a pair of solutions (Proposition \ref{cgo}) is dense in $L^1(\Om)$, we get
		\begin{equation*}
			\partial_{x_n}^3c_1(x',0)=\lambda\partial_{x_n}^3c_2(x',0),\quad x'\in\Om.
		\end{equation*}
		Also \eqref{secondlinproof}, \eqref{secondlinproof2} together with the previous equality and $c_1(x',0)=\lambda c_2(x',0)$, gives the following boundary value problem
		\begin{equation*}
			\left\{\begin{array}{ll}
				\dive\left(c_1(x',0)^{\frac{n-1}{2}}\nabla\left(w_1^{(al)}-w_2^{(al)}\right)\right)=0, & \text{in}\,\, \Om \\
				w_1^{(al)}-w_2^{(al)}=0, & \text{on}\,\, \partial\Om. \phantom{\Big|}
			\end{array} \right.
		\end{equation*}
		This has a unique solution and thus $w_1^{(al)}=w_2^{(al)}$.


		Next we use induction to show $\partial_{x_n}^kc_1(x',0)=\lambda\partial_{x_n}^kc_2(x',0)$ for all $k\in\N$. By the above this already holds for $k=0, 1, 2, 3.$ Our assumption now is
		\begin{equation*}\label{induction}
			\partial_{x_n}^kc_1(x',0)=\lambda\partial_{x_n}^kc_2(x',0),\quad x'\in\Om,\va \text{for all}\quad k=0,1,2,\ldots,N\in\N, N>3.
		\end{equation*}
		Let us do a subinduction to prove
		\begin{equation*}
			\partial_{l_1\dots l_k}^ku_1(x',0)=\partial_{l_1\dots l_k}^ku_2(x',0),\quad x'\in\,\,\Om,
		\end{equation*}
		for all $k=1,\ldots,N$, where $\partial_{l_1\dots l_k}^ku_j(x',0)=\frac{\partial^ku_j(x',0)}{\partial_{\e_{l_1}}\dots\partial_{\e_{l_k}}}$. Above we have shown this for $k=1,2$. Assume that it holds for $k\leq K<N$. Then the linearization of order $K+1$ is, when evaluated at $\e_1=\dots=\e_{K+1}=0$,
		\begin{align}\label{subinduction}
			&\dive\left(c_j(x',0)^{\frac{n-1}{2}}\nabla\left(\partial_{l_1\dots l_{K+1}}^{K+1}u_j(x',0)\right)\right)+R_K(u_j,c_j(x',0),0)\\\notag
			&+Cc_j(x',0)^{\frac{n-1}{2}-1}\partial_{x_n}^{K+2}c_j(x',0)\left(\prod_{k=1}^{K+1}v^{(l_k)}\right)=0,
		\end{align}
		$x'\in\,\,\Om$, where $C\neq0$. Actually $C=\frac{n-1}{2}$, since it comes from the $u$ derivatives of $F$ and is the constant $\frac{n-1}{2}$ appearing in front of the second term of $F$. Also, here $R$ is a polynomial of $\partial_{x_n}^kc_j(x',0), \partial_{l_1\dots l_k}^ku_1(x',0)$ and the components of $\nabla_{x'}\left(\partial_{x_n}^kc_j(x',0)\right)$. Now an integration by parts argument similar to the case of the second linearization and together with Proposition \ref{cgo} (choosing $v^3=\ldots=v^{K+1}=1$) gives $\partial_{x_n}^{K+2}c_1(x',0)=\lambda\partial_{x_n}^{K+2}c_2(x',0)$.
		
		Subtracting the equations \eqref{subinduction} (similarly as for equations \eqref{secondlinproof} and\eqref{secondlinproof2}) for $j=1,2$ we get
		\begin{equation*}
			\left\{\begin{array}{ll}
				\dive\left(c_1(x',0)^{\frac{n-1}{2}}\nabla\left(\partial_{l_1\dots l_{K+1}}^{K+1}u_1(x',0)-\partial_{l_1\dots l_{K+1}}^{K+1}u_2(x',0)\right)\right)=0, & \text{in}\,\, \Om \\
				\partial_{l_1\dots l_{K+1}}^{K+1}u_1(x',0)-\partial_{l_1\dots l_{K+1}}^{K+1}u_2(x',0)=0, & \text{on}\,\, \partial\Om. \phantom{\Big|}
			\end{array} \right.
		\end{equation*}
		This is true, since by induction assumptions for all $x'\in\Om$ we have \linebreak $\nabla_{x'}\left(\partial_{x_n}^kc_1(x',0)-\partial_{x_n}^kc_2(x',0)\right)=0$ and the other terms agree for $j=1,2$, $k\leq K$. Again, by the uniqueness of solutions, $\partial_{l_1\dots l_{K+1}}^{K+1}u_1(x',0)=\partial_{l_1\dots l_{K+1}}^{K+1}u_2(x',0)$, $x'\in\Om$, which ends the subinduction.
		
		Returning to the original induction, the linearization of order $N+1$ at $\e_1=\dots=\e_{N+1}=0$ is
		\begin{align*}
			&\dive\left(c_j(x',0)^{\frac{n-1}{2}}\nabla\left(\partial_{l_1\dots l_{N+1}}^{N+1}u_j(x',0)\right)\right)\\
			&+R_{N+1}(u_j,c_j(x',0),0)+Cc_j(x',0)^{\frac{n-1}{2}-1}\partial_{x_n}^{N+2}c_j(x',0)\left(\prod_{k=1}^{N+1}v^{(l_k)}\right)=0,
		\end{align*}
		$x'\in\,\,\Om.$
		By the subinduction, the terms $R_N(u_j,c_j(x',0),0)$ agree for $j=1,2$. Thus by subtracting, using integration by parts and that $\partial_{\nu}\partial_{l_1\dots l_{N+1}}^{N+1}u_1(x',0)|_{\partial\Om}=\partial_{\nu}\partial_{l_1\dots l_{N+1}}^{N+1}u_2(x',0)|_{\partial\Om}$ we get
		\begin{equation*}
			\int_{\Om}c_j(x',0)^{\frac{n-1}{2}-1}\left(\partial_{x_n}^{N+2}c_1(x',0)-\lambda\partial_{x_n}^{N+2}c_2(x',0)\right)\prod_{k=1}^{N+1}v^{l_k}\,dx'=0.
		\end{equation*}
		Choosing all but two of the functions $v^{l_k}$ to be equal to $1$, we have by the completeness of such solutions (Proposition \ref{cgo}) that
		\begin{equation*}
			\partial_{x_n}^{N+2}c_1(x',0)=\lambda\partial_{x_n}^{N+2}c_2(x',0),\quad x'\in\,\,\Om,
		\end{equation*}
		which ends the proof for case $(1)$.
		
		\vspace{8pt}
		\textbf{Case $(2)$:} 
		Now we assume that the partial DN maps coincide for $f\in U_{\delta}$, $\spt(f)\subset\Gamma$. Then, as in the previous case, from the first linearization we get $c_1(x',0)=\lambda c_2(x',0)$ in $\Om$ (using first the boundary determination from \cite{Brown2006}, then \cite{Imanuvilov2010} for $n=3$ and \cite{Isakov2007} for $n>3$). Now define $v^l:=v^l_1=v^l_2$ again by uniqueness of solutions.
		
		Moving to the second order linearizations and recovering higher order derivatives produces some extra work since we only have partial data. From the assumption that the DN maps coincide we get
		\begin{equation}\label{(3) 1.}
			\partial_{\nu}w_1^{al}\big|_{\Gamma}=\partial_{\nu}w_2^{al}\big|_{\Gamma}.
		\end{equation}
		If we would now integrate the difference of \eqref{secondlinproof}, for $j=1$, and \eqref{secondlinproof2} against $v\equiv1$ and integrate by parts, some terms would not cancel out. Let us instead introduce the function $v^{(0)}$ which is a solution to
		\begin{equation}\label{nonzero solution}
			\left\{\begin{array}{ll}
				\dive\left(c_1(x',0)^{\frac{n-1}{2}} \nabla v^{(0)}\right)=0, & \text{in}\,\, \Om \\
				v^{(0)}=0, & \text{on}\,\, \partial\Om\setminus\Gamma \phantom{\Big|}\\
				v^{(0)}=g, & \text{on}\,\, \Gamma,
			\end{array} \right.
		\end{equation}
		where $g\in C^{\infty}_c(\Gamma)$ such that $g\geq0$ and $g\not\equiv0$. Then by the maximum principle $v^{(0)}>0$ in $\Om$. Now we integrate against this and use integration by parts to have
		\begin{align*}
			&\int_{\Om}\frac{n-1}{2}c_1(x',0)^{\frac{n-1}{2}-1}\left(\partial_{x_n}^3c_1(x',0)-\lambda\partial_{x_n}^3c_2(x',0)\right)v^{(0)}v^lv^a\,dx'\\
			&=\int_{\Om}\dive \left(c_1(x',0)^{\frac{n-1}{2}}\nabla \left(w_1^{(al)}- w_2^{(al)}\right)\right)v^{(0)}\,dx'\\
			&=\int_{\Om}(w_1^{al}-w_2^{al})\dive\left(c_1(x',0)^{\frac{n-1}{2}} \nabla v^{(0)}\right)\,dx'\\
			&+\int_{\Gamma}c_1(x',0)^{\frac{n-1}{2}}\left(\partial_{\nu}(w_1^{al}-w_2^{al})v^{(0)}-(w_1^{al}-w_2^{al})\partial_{\nu}v^{(0)}\right)\,dS\\
			&+\int_{\partial\Om\setminus\Gamma}c_1(x',0)^{\frac{n-1}{2}}\left(\partial_{\nu}(w_1^{al}-w_2^{al})v^{(0)}-(w_1^{al}-w_2^{al})\partial_{\nu}v^{(0)}\right)\,dS\\
			&=0.
		\end{align*}
		In the last inequality we used equation \eqref{(3) 1.}, the fact that $v^{(0)}$ solves \eqref{nonzero solution} and that $w_j^{al}=0$ on $\partial\Om$ for $j=1,2$. Then using Proposition \ref{cgo} and the positivity of $v^{(0)}$ we can conclude
		\begin{equation*}
			\partial_{x_n}^3c_1(x',0)=\lambda\partial_{x_n}^3c_2(x',0),\quad x'\in\Om.
		\end{equation*}
		
		As in the previous case we use induction to show $\partial_{x_n}^kc_1(x',0)=\lambda\partial_{x_n}^kc_2(x',0)$ for all $k\in\N$. By the above this already holds for $k=0, 1, 2, 3.$ Our assumption now is
		\begin{equation*}\label{induction2}
			\partial_{x_n}^kc_1(x',0)=\lambda\partial_{x_n}^kc_2(x',0),\quad x'\in\Om,\va \text{for all}\quad k=0,1,2,\ldots,N\in\N, N>3.
		\end{equation*}
		By a subinduction we can show that 
		\begin{equation*}
			\partial_{l_1\dots l_k}^ku_1(x',0)=\partial_{l_1\dots l_k}^ku_2(x',0),\quad x'\in\,\,\Om,
		\end{equation*}
		for all $k=1,\ldots,N$, where $\partial_{l_1\dots l_k}^ku_j(x',0)=\frac{\partial^ku_j(x',0)}{\partial_{\e_{l_1}}\dots\partial_{\e_{l_k}}}$. This goes in the same way as in the previous case except the integration by parts argument needs to be done as shown in this case.
		
		Returnig to the original induction, the rest of the proof is again the same as in case $(2)$. We only need to modify the integration by parts argument using again the function $v^{(0)}$ and Proposition \ref{cgo} which finishes the proof.
	\end{proof}

	\printbibliography
\end{document}